\newtheorem{Theorem}[subsection]{Theorem}
\newtheorem{Proposition}[subsection]{Proposition}
\newtheorem{Lemma}[subsection]{Lemma}
\theoremstyle{definition}
\newtheorem{Remark}[subsection]{Remark}
\newtheorem{proposition-definition}[subsection]{Proposition-Definition}
\DeclarePairedDelimiter\floor{\lfloor}{\rfloor}
\begin{document}

\title {Lazarsfeld-Mukai bundles on K3 surfaces associated with a pencil computing  the Clifford index }


\author[S.Pal]{Sarbeswar Pal}

\address{Indian Institute of Science Education and Research, Thiruvananthapuram,
Maruthamala PO, Vithura,
Thiruvananthapuram - 695551, Kerala, India.}

\email{sarbeswar11@gmail.com,  spal@iisertvm.ac.in}

\keywords{K3 surface, Lazarsfeld-Mukai bundles, Clifford index.   
}
\date{}

\begin{abstract}
Let $X$ be a  smooth  projective K3 surface over the complex numbers and let $C$ be an ample curve on $X$. In this paper we will study the semistability of the Lazarsfeld-Mukai bundle $E_{C, A}$ associated to a line bundle  $A$ on $C$ such that $|A|$
is a pencil on $C$ and computes the Clifford index of $C$. We give a necessary and sufficient condition for $E_{C, A}$ to be semistable. 
\end{abstract}

\maketitle

\section{Introduction}

Let $X$ be a smooth projective K3 surface over the complex numbers and $C$ be a smooth projective ample curve in $X$. Given a globally generated line bundle $A$ on $C$,
the Lazarsfeld-Mukai bundle $E_{C, A}$ is defined as the dual of the kernel of the evaluation map
\[
H^0(A) \otimes \mathcal{O}_X \to \iota_*(A),
\]
 where $\iota : C \to X$ is the inclusion.  To study the behavior  of certain invariants of a curve $C$ along a linear system  (for example Clifford index, gonality, gonality sequence etc)  it has been essential to investigate the properties of the Lazarsfeld-Mukai bundles. Several authors have studied the Lazarsfeld-Mukai bundle in several contexts for example see \cite{CP}, \cite{GL}, \cite{S}, \cite{LC} and the references therein.  \\
 The study of the stability (Gieseker or slope) of vector bundles with respect to a given ample line bundle on algebraic varieties is a very active topic in algebraic geometry. 
The purpose of this paper is to study the stability  of the Lazarsfeld-Mukai  bundle $E_{C, A}$ associated to a curve $C$ on  a K3 surfaces $X$ and a globally generated line bundle
$A$ on $C$. In \cite{LC} Margherita Lelli-Chiesa  proved that if $C$ is a $\floor{\frac{g+3}{2}}$-gonal curve of genus 
$g$ and Clifford dimension one and degree of $A = d$ satisfies $\rho(g, 1, d) = 2d -g -2 > 0$, then $E_{C, A}$ is stable with respect to $\mathcal{O}_X(C)$.  In \cite{KW}, Watanable has shown 
that if $E_{C, A}$ is not slope semistable with respect to $\mathcal{O}_X(C)$, the maximal destabilizing subsheaf of it contains an initialized and ACM line bundle with respect to $\mathcal{O}_X(C)$, which gives a sufficient condition for $E_{C, A}$ to be $\mathcal{O}_X(C)-$slope semistable and gave some examples. 

In this article, we will show that if the Clifford index of $C$ is strictly smaller than $\floor{\frac{g-1}{2}} -1$ and $A$ is a pencil on $C$ computing the Clifford index of $C$, then $E_{C, A}$
is never $\mathcal{O}_X(C)-$slope stable. In fact we shall show that if $E_{C, A}$ is semistable then it is free, that is, $E_{C, A}$ is a direct sum of line bundles of the same slope. Hence $C$ must have a decomposition of the form $C \sim 2D$, where $D$ is an effective divisor.  More precisely we will prove the following theorem:
\begin{Theorem}\label{MT}
Let $C$ be a smooth projective curve on a smooth projective K3 surface $X$ of Clifford index $< [\frac{g-1}{2}] -1$. Let $A$ be a globally generated pencil on $C$ computing the 
Clifford index of $C$. If $E_{C, A}$ is semistable then it splits as a direct sum of line bundles of the same slope with respect to $\mathcal{O}_X(C)$ and $C$ decomposes as $C \sim 2D$
for some effective divisor $D$ and the Clifford index of $C$, is $[\frac{g-1}{2}]-2$. In particular $E_{C, A}$ can never be stable. Furthermore, if the Clifford index of $C$,  is 
strictly smaller that $[\frac{g-1}{2}] -2$, then $E_{C, A}$ is never semi-stable. 
Conversely, if the Clifford index of $C$, is equals to $[\frac{g-1}{2}]-2$ and $C$ can be decomposed as $C \sim 2D$ but $C \nsim 2D^\prime + kE + k\Delta$, where $k$ is a positive integer,  $E$ is a smooth elliptic curve and $\Delta$ is a $(-2)$ curve, then for a pencil $A$ computing Clifford index of $C E_{C, A}$ is semistable.
\end{Theorem}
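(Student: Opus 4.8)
The plan is to read off the numerical invariants of the Lazarsfeld--Mukai bundle and then play Bogomolov's inequality against the Clifford-index hypothesis. Since $A$ is a globally generated pencil, $E_{C,A}$ has rank $2$, $c_1(E_{C,A}) = \mathcal{O}_X(C)$ and $c_2(E_{C,A}) = \deg A =: d$; because $A$ computes the Clifford index one has $\mathrm{Cliff}(C) = d - 2$, so $d = \mathrm{Cliff}(C) + 2$. With respect to $H = \mathcal{O}_X(C)$ the slope is $\mu(E_{C,A}) = \tfrac{1}{2}C^2 = g - 1$ and the discriminant is $\Delta(E_{C,A}) = 4c_2 - c_1^2 = 4d - (2g-2)$. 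First I would invoke Bogomolov's inequality: if $E_{C,A}$ is $\mu$-semistable then $\Delta(E_{C,A}) \ge 0$, i.e.\ $d \ge \tfrac{g-1}{2}$. On the other hand the hypothesis $\mathrm{Cliff}(C) < [\tfrac{g-1}{2}] - 1$ is exactly $d \le [\tfrac{g-1}{2}]$. These two inequalities are compatible only when $[\tfrac{g-1}{2}] = \tfrac{g-1}{2}$ (so $g$ is odd) and $d = \tfrac{g-1}{2}$, forcing $\mathrm{Cliff}(C) = [\tfrac{g-1}{2}] - 2$ and $\Delta(E_{C,A}) = 0$. This simultaneously yields the ``Furthermore'' clause: if $\mathrm{Cliff}(C) < [\tfrac{g-1}{2}] - 2$ then $d < \tfrac{g-1}{2}$, hence $\Delta(E_{C,A}) < 0$ and $E_{C,A}$ cannot be semistable.

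For the splitting I would use the structure theory of $\mu$-semistable sheaves with vanishing discriminant. A $\mu$-semistable bundle with $\Delta = 0$ on a smooth projective surface is projectively flat: L\"ubke's inequality becomes an equality, so by the Kobayashi--Hitchin correspondence $E_{C,A}$ carries a projectively flat Hermite--Einstein connection and is in particular $\mu$-polystable. As $X$ is simply connected, $\mathbb{P}(E_{C,A})$ is then the trivial projective bundle, whence $E_{C,A} \cong L^{\oplus 2}$ for a line bundle $L$. Taking determinants gives $L^{2} = \mathcal{O}_X(C)$, so $C \sim 2D$ with $L = \mathcal{O}_X(D)$, and $E_{C,A}$ is the direct sum $\mathcal{O}_X(D) \oplus \mathcal{O}_X(D)$ of two line bundles of the same slope $g-1$. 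In particular $E_{C,A}$ is strictly semistable and can never be stable, which establishes the first assertions of the theorem.

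For the converse I would argue by contradiction, assuming $E_{C,A}$ is not $\mu$-semistable. Let $M \subset E_{C,A}$ be the maximal destabilizing line subbundle, so $M \cdot C > g - 1$, i.e.\ $M \cdot C \ge g$, and write the associated sequence $0 \to M \to E_{C,A} \to I_Z(C - M) \to 0$ with $Z$ zero-dimensional and $c_2(E_{C,A}) = M\cdot(C-M) + \ell(Z) = d$. A direct computation gives $\Delta(E_{C,A}) = 4\ell(Z) - (2M - C)^2$; since $d = \tfrac{g-1}{2}$ forces $\Delta(E_{C,A}) = 0$, we obtain $(2M-C)^2 = 4\ell(Z) \ge 0$. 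Setting $P = 2M - C$, we have $P \cdot C = 2M\cdot C - C^2 \ge 2 > 0$, so Riemann--Roch ($\chi(P) = 2 + \tfrac{1}{2}P^2 \ge 2$) together with the non-effectivity of $-P$ shows $P$ is effective. Using the hypothesis $C \sim 2D$ I may write $P = 2Q$ with $Q = M - D$, and the same argument shows $Q$ is effective with $Q^2 = \ell(Z)$ and $Q \cdot D \ge 1$ (the last from $M\cdot C = (g-1) + 2Q\cdot D \ge g$). I would also record the key geometric fact that $D|_C$ is a theta-characteristic: $\deg(D|_C) = D\cdot C = g-1$ and $2\,(D|_C) = \mathcal{O}_C(C) = \omega_C$.

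The heart of the proof, and the step I expect to be the main obstacle, is to show that the existence of such a destabilizing $Q$ forces the forbidden decomposition $C \sim 2D' + kE + k\Delta$. Tensoring the Lazarsfeld--Mukai sequence $0 \to \mathcal{O}_X^{\oplus 2} \to E_{C,A} \to \iota_*(\omega_C \otimes A^\vee) \to 0$ by $\mathcal{O}_X(-M)$ and taking cohomology, the inclusion $M \hookrightarrow E_{C,A}$ must map nontrivially to $\iota_*(\omega_C\otimes A^\vee)$, otherwise it would factor through $\mathcal{O}_X(-M)^{\oplus 2}$, which has no sections; hence $\omega_C \otimes A^\vee \otimes \mathcal{O}_X(-M)|_C$ is effective on $C$, which bounds $M \cdot C$ and exhibits $M|_C$ inside a bundle computing the Clifford index. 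I would then split into cases according to $\ell(Z) = Q^2$. If $Q^2 = 0$, the classification of effective square-zero classes on a K3 surface writes $Q$ in terms of the class of a smooth elliptic curve and $(-2)$-curves; feeding this back through $C \sim 2(D+Q) - P$ and matching multiplicities produces a decomposition of the excluded shape $C \sim 2D' + kE + k\Delta$, contradicting the hypothesis. If $Q^2 > 0$, then $Q$ is big, and the line bundle $M|_C = (D|_C) + (Q|_C)$, a twist of the theta-characteristic, yields a contribution to the Clifford index strictly smaller than $\mathrm{Cliff}(C)$ via a Hodge-index estimate on $Q$, contradicting the minimality of $A$. Since both cases are impossible under the hypotheses, no destabilizing $M$ exists and $E_{C,A}$ is semistable. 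Controlling the passage from the numerical data of $Q$ to the precise effective decomposition of $C$ --- in particular producing the matched coefficient $k$ on both $E$ and $\Delta$ --- is where the real work lies.
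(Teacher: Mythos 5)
Your Bogomolov computation for the numerical part is correct and genuinely slicker than the paper's route: from $4c_2-c_1^2=4d-(2g-2)$ together with $d\le[\frac{g-1}{2}]$ you get $d=\frac{g-1}{2}$, $g$ odd, $\mathrm{Cliff}(C)=[\frac{g-1}{2}]-2$, and the ``Furthermore'' clause all at once, whereas the paper reaches these numbers only after building an explicit sub-line-bundle (non-simplicity via Lemma \ref{L1}(6) plus the Donagi--Morrison Lemma \ref{L2}) and splitting the extension by a Saint-Donat analysis of $h^1(M\otimes N^*)$. The splitting step, however, has a real gap: the Kobayashi--Hitchin correspondence takes \emph{polystability} as input, so you cannot ``apply L\"ubke's inequality with equality'' to a bundle you only know to be semistable. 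The implication ``$\mu$-semistable with vanishing discriminant $\Rightarrow$ projectively flat/polystable'' is false on a general surface (a non-split self-extension $0\to L\to E\to L\to 0$ on a surface with $h^1(\mathcal{O})\ne 0$ is semistable with discriminant $0$ and not polystable). On a K3 the conclusion is true but must be argued: rule out $\mu$-stability (a simple sheaf on a K3 has $\chi(E,E)=8-(4c_2-c_1^2)\le 2$, impossible when the discriminant vanishes), take a saturated equal-slope line subbundle $M$ with quotient $N\otimes\mathcal{I}_Z$, and use $4\ell(Z)-(M-N)^2=0$ together with $(M-N)\cdot C=0$ and the Hodge index theorem to force $\ell(Z)=0$ and $M\cong N$; then $\mathrm{Ext}^1(N,N)=H^1(\mathcal{O}_X)=0$ splits the sequence. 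Some such argument must replace the appeal to projective flatness.

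The converse is where the proposal falls genuinely short, and you say so yourself. The reduction to the effective class $Q=M-D$ with $Q^2=\ell(Z)$ and $Q\cdot D\ge 1$ is fine and runs parallel to Lemma \ref{L3}, but both decisive implications are only announced. For $Q^2>0$ you must produce a line bundle on $C$ contributing to the Clifford index with $\mathrm{Cliff}\le M\cdot N-2=\frac{g-1}{2}-Q^2-2<[\frac{g-1}{2}]-2$; a ``Hodge-index estimate on $Q$'' does not yield one --- what is needed are the section counts $h^0(M_{\mid_C}),h^0(N_{\mid_C})\ge 2$ and the identification of $\mathrm{Cliff}(M_{\mid_C})$ that the paper carries out in Lemma \ref{L4}, and this cannot be skipped. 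For $Q^2=0$ the passage from an effective isotropic class to the precise shape $C\sim 2D'+kE+k\Delta$ with matched coefficients is Proposition \ref{P1} and Remark \ref{RZ} applied to the class $2Q=M-N$, whose nefness (Remark \ref{RY}) is what makes that classification apply; without this step the ``excluded shape'' is never actually produced. In sum, the forward direction is correct modulo a repairable misuse of Kobayashi--Hitchin, but the converse as written is a plan rather than a proof.
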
 

{\bf Notations and Conventions}
We work over the complex number field $\mathbb{C}$. Surfaces and curves are smooth and projective. For a  curve $C$, we denote by $K_C$ the canonical line bundle of $C$. For a line bundle $L$ on a smooth 
projective variety $X$, we denote  by $|L|$  the linear system defined by $L$, i.e., $|L|= \mathbb{P}({H^0(L)}^*)$. \\
For a line bundle $A$ on a  curve $C$, the Clifford index of $A$ is defined as follows;
\begin{center}
$\text{Cliff}(A) := \text{degree}(A) - 2  \text{dim}(|A|)$.
\end{center}
The Clifford index of a  curve $C$ is defined as follows;
\begin{center}
$\text{Cliff}(C) :=\text{min}\{ \text{Cliff}(A) | h^0(A) \ge 2, h^1(A) \ge 2\}$.
\end{center}
Clifford's theorem states that $\text{Cliff}(C) \ge 0$ with equality if and only if $C$ is
hyperelliptic, and similarly $\text{Cliff}(C)= 1$ if and only if $C$ is trigonal or a smooth
plane quintic. At the other extreme, if $C$ is a general curve of genus $g$ then
$\text{Cliff}(C)= [(g-1)/2]$, and in any event $\text{Cliff}( C) \le [(g -1)/2]$. We say that a line
bundle $A$ on $C$ contributes to the Clifford index of $C$ if $A$ satisfies the
inequalities in the definition of $\text{Cliff}(C)$;  it computes the Clifford index of $C$ if in
addition $\text{Cliff}(C) = \text{Cliff}(A)$.
\section{Linear system on K3 surfaces}
In this section we recall some classical results about line bundles and divisors on K3 surfaces.
\begin{Proposition}\label{P1}
Let $D$ be a non-zero effective divisor with $D^2 \ge 0$ on a K3 surface $X$. Then one can write $D \sim D^{'} + \Delta$, where $\Delta$ is the fixed component of $D$ and $D^{'}$ is base point free. Let $\Delta_1, \Delta_2, ... \Delta_n$ be the connected reduced components of $\Delta$.  Then one of the following holds:\\
(i) there exists an elliptic 
curve $E$, such that $D^{'} \sim kE $, for some integer $k \ge 2$  and there exist one and only one connected reduced component $\Delta_1$ of $\Delta$ such that $E\cdot \Delta_1= 1$ and  $\Delta_i\cdot E= 0,$ for $i \ne 1$.\\
(ii) $D^{'}$ is an irreducible and $D^{'}. \Delta_i = 0 \text{ or } 1, i= 1, 2, ..., n$. 

\end{Proposition}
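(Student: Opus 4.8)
The plan is to run the classical structure theory of Saint--Donat for effective divisors with non-negative self-intersection on a K3 surface, and then to extract the two intersection statements by Riemann--Roch bookkeeping. First I would record that $|D|$ genuinely moves: by Riemann--Roch $\chi(\mathcal{O}_X(D)) = 2 + \tfrac12 D^2 \geq 2$, while $h^2(\mathcal{O}_X(D)) = h^0(\mathcal{O}_X(-D)) = 0$ because $D$ is effective and non-zero, so $h^0(\mathcal{O}_X(D)) \geq 2$ and the decomposition $D = D' + \Delta$ into moving part $D'$ and fixed part $\Delta$ has $D' \neq 0$. The key classical input is that a complete linear system on a K3 surface with no fixed component is base-point free; applied to $|D'|$ this shows $D'$ is base-point free, hence nef, so $D'^2 \geq 0$. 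I then split according to $D'^2$, again using Saint--Donat's classification of base-point-free systems: if $D'^2 = 0$ then $D' \sim kE$ for a smooth elliptic curve $E$ with $E^2 = 0$ and $k = h^0(D') - 1 \geq 1$, while if $D'^2 > 0$ the general member of $|D'|$ is smooth and irreducible. Since $k=1$ gives $D' = E$ irreducible, the case $D'^2 = 0$ with $k \geq 2$ is exactly alternative (i), and the remaining possibilities fall under (ii).

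Before bounding intersections I would pin down the shape of $\Delta$. Every irreducible component $\Gamma$ of $\Delta$ is a $(-2)$-curve: an irreducible curve with $\Gamma^2 \geq 0$ has $h^0(\mathcal{O}_X(\Gamma)) \geq 2$ by Riemann--Roch and so moves in its own system, hence cannot be a fixed component; on a K3 the only alternative is $\Gamma^2 = -2$. The same ``fixed reduced curves do not move'' principle forces each connected reduced component $\Delta_i$ to have $\Delta_i^2 < 0$; since $\Delta_i^2$ is even and, by connectedness of the configuration of $(-2)$-curves, at least $-2$, we get $\Delta_i^2 = -2$ for every $i$. Distinct $\Delta_i$ are disjoint, so $\Delta_i \cdot \Delta_j = 0$ for $i \neq j$.

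The two intersection bounds then come from Riemann--Roch applied to $D' + \Delta_i$, using the constancy $h^0(D' + \Delta_i) = h^0(D')$ (which holds because $D' \leq D' + \Delta_i \leq D$ squeezes the $h^0$). Comparing Euler characteristics and substituting $\Delta_i^2 = -2$ gives the single identity $D' \cdot \Delta_i = 1 + h^1(D') - h^1(D' + \Delta_i)$. In case (ii) I feed in $h^1(D') = 0$ (from Kawamata--Viehweg vanishing when $D'$ is nef and big, and by a direct count when $D' = E$) to conclude $D' \cdot \Delta_i = 1 - h^1(D'+\Delta_i) \in \{0,1\}$, the lower bound coming from nefness. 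In case (i) the analogous computation with $h^0(kE) = k+1$ (hence $h^1(kE)=k-1$) yields $h^1(kE+\Delta_i) = k(1 - E\cdot\Delta_i) \geq 0$, so $E \cdot \Delta_i \in \{0,1\}$ for every $i$.

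The remaining and, I expect, most delicate point is the \emph{uniqueness} in (i): that exactly one component meets $E$. Viewing $|E|$ as a genus-one fibration, the lower bound (at least one) follows because a purely vertical fixed divisor $\Delta$ on an elliptic K3 has $\Delta^2 < 0$ (it cannot contain a full, moving fibre), which would force $D^2 = \Delta^2 < 0$, a contradiction. The upper bound (at most one) is again a Riemann--Roch count: if two distinct components satisfied $E \cdot \Delta_1 = E \cdot \Delta_2 = 1$, then $(kE + \Delta_1 + \Delta_2)^2 = 4k-4$ gives $\chi(kE + \Delta_1 + \Delta_2) = 2k$, whence $h^0(kE + \Delta_1 + \Delta_2) \geq 2k > k+1$ for $k \geq 2$, contradicting $h^0(kE + \Delta_1 + \Delta_2) \leq h^0(D) = k+1$ forced by $\Delta_1 + \Delta_2 \leq \Delta$. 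Thus exactly one component is horizontal and meets $E$ once, completing (i). I expect the extraction of these intersection numbers, and in particular the two-sided argument for uniqueness in (i), to be the part requiring the most care, the rest being a faithful application of the Saint--Donat machinery.
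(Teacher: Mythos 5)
The paper offers no argument for this Proposition at all: its ``proof'' is the single line ``See [2.7, SD]'', outsourcing everything to Saint--Donat. What you have written is therefore necessarily a different route in form, but in substance it is a faithful and essentially correct reconstruction of Saint--Donat's own argument, and the numerology checks out: the identity $D'\cdot\Delta_i = 1 + h^1(D') - h^1(D'+\Delta_i)$ follows from $\chi(D'+\Delta_i)=\chi(D')+D'\cdot\Delta_i-1$ together with $h^0(D'+\Delta_i)=h^0(D')$ and the vanishing of both $h^2$'s; feeding in $h^1(D')=0$ in case (ii) and $h^1(kE)=k-1$ in case (i) gives exactly the bounds $D'\cdot\Delta_i\in\{0,1\}$ and $E\cdot\Delta_i\in\{0,1\}$; and your counting argument for ``at most one horizontal component'' ($\chi(kE+\Delta_1+\Delta_2)=2k>k+1=h^0(D)$ for $k\ge 2$) is clean and correct. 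What your write-up buys over the paper is an actual proof; what it costs is that several standard inputs are invoked tersely and would need to be spelled out in a final version: the superadditivity $h^0(A+B)\ge h^0(A)+h^0(B)-1$ behind every ``fixed curves do not move'' step, the evenness of the intersection form and the connectedness count giving $\Delta_i^2=-2$, and Zariski's lemma (negative semi-definiteness on vertical divisors, with kernel spanned by full fibres) in the ``at least one'' step.

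One point deserves an explicit flag. Your argument that some component of $\Delta$ meets $E$ derives a contradiction from $D^2=\Delta^2<0$, which silently assumes $\Delta\neq 0$; if $\Delta=0$ and $D\sim kE$ with $k\ge 2$, there is no contradiction ($D^2=0$), and indeed neither alternative of the Proposition as transcribed in the paper literally holds in that case (case (i) asserts the existence of a component $\Delta_1$ with $E\cdot\Delta_1=1$, and case (ii) requires $D'$ irreducible). This is an imprecision inherited from the paper's statement rather than a flaw in your reasoning, but your proof of the ``at least one'' clause is exactly the place where the tacit hypothesis $\Delta\neq 0$ enters, so you should state it.
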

\begin{proof}
See \cite[2.7]{SD}.
\end{proof}
\begin{Remark}\label{RZ}
(a) Note that any connected reduced component of $\Delta$ has self intersection number   $\le -2$.   Thus if $\Delta_1$ is a connected reduced component of $\Delta$, and the second case occurs then
$D. \Delta_1 < 0$. Thus if $D$ is  nef, then we get a contradiction. In other words, if $D$ is nef then case (ii) does not occur. \\
Similarly, in case (i), one can see that  if $D$ is nef, then $D \sim kE + \Delta$, where $k \ge 2$ and $\Delta$ is an irreducible $(-2)$ curve.\\
(b) Further more if $D^2 = 0$, then $\Delta = 0$, hence $D \sim kE$ for some non-negative integer $k$. 
\end{Remark}
\begin{Proposition}\label{P2}
Let $L$ be a line bundle on a K3 surface $X$ such
that $|L| \ne  \varnothing$ and such that $|L|$ has no fixed components. Then either\\
(i) $L^2 > 0$, and the generic member of $|L|$ is an irreducible curve of
arithmetic genus $\frac{1}{2}(L.L) + 1$. In this case $h^1(L) = 0$, or\\
(ii) $L^2=0$, then $L \cong (\mathcal{O}_X(E))^{\otimes k}$, where $k$ is an integer $\ge 1$ and $E$ an
irreducible curve of arithmetic genus $1$. In this case $h^1(L) = k-1$  and every
member of  $|L|$ can be written as a sum $E_1+ E_2 + ... +E_k$, where $E_i \in |E|$  for $i= 1, 2, ..., k$.
\end{Proposition}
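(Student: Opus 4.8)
The plan is to run the classical Saint--Donat dichotomy by combining Riemann--Roch on the K3 surface with the structural result of Proposition \ref{P1}. First I would record that the hypothesis that $|L|$ has no fixed components forces $L$ to be nef: if some irreducible curve $D$ satisfied $L\cdot D<0$, then any member of $|L|$ not containing $D$ would meet $D$ non-negatively, so every member would contain $D$ and $D$ would be a fixed component. Next, since $X$ is a K3 surface we have $K_X\cong\mathcal{O}_X$ and $\chi(\mathcal{O}_X)=2$, so Riemann--Roch reads $\chi(L)=2+\tfrac12 L^2$. Assuming $L\not\cong\mathcal{O}_X$ (so that the members of $|L|$ are genuine curves), $-L$ is not effective, whence Serre duality gives $h^2(L)=h^0(-L)=0$ and therefore $h^0(L)=2+\tfrac12 L^2+h^1(L)$.

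For case (i), suppose $L^2>0$, so $L$ is nef and big. Kawamata--Viehweg vanishing (equivalently the classical K3 vanishing) gives $H^i(X,K_X+L)=H^i(X,L)=0$ for $i>0$; in particular $h^1(L)=0$ and $h^0(L)=2+\tfrac12 L^2$. The arithmetic genus of any $C\in|L|$ then follows from adjunction: since $K_X=0$, $2p_a(C)-2=C^2=L^2$, so $p_a(C)=\tfrac12 L^2+1$. Moreover $h^1(-L)=h^1(L)=0$ by Serre duality, so the structure sequence $0\to\mathcal{O}_X(-L)\to\mathcal{O}_X\to\mathcal{O}_C\to0$ yields $h^0(\mathcal{O}_C)=1$, i.e.\ the general member is connected. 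To upgrade connectedness to irreducibility I would show that $|L|$ is not composed with a pencil: because $q(X)=h^1(\mathcal{O}_X)=0$ any such pencil would be rational with fibre class $F$ satisfying $F^2=0$, and the (fixed-component-free) class $L$ would be a multiple of $F$, forcing $L^2=0$ and contradicting $L^2>0$. Hence the image of $\phi_{|L|}$ is a surface, and Bertini's second theorem (in characteristic zero) guarantees that the general member of $|L|$ is irreducible.

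For case (ii), suppose $L^2=0$. Since $L$ is nef and effective with $L^2=0$ and $|L|$ has no fixed part, Remark \ref{RZ}(b) applied to Proposition \ref{P1} shows $L\sim kE$ for some integer $k\ge1$ and an irreducible curve $E$ with $E^2=0$, i.e.\ of arithmetic genus $1$. Here $|E|$ is a base-point-free pencil defining an elliptic fibration, so $E$ is a fibre and $\mathcal{O}_E(E)=N_{E/X}\cong\mathcal{O}_E$. I would then compute cohomology by induction on $k$ using $0\to\mathcal{O}_X((j-1)E)\to\mathcal{O}_X(jE)\to\mathcal{O}_E(jE)\to0$ with $\mathcal{O}_E(jE)\cong\mathcal{O}_E$, so that $h^0(\mathcal{O}_E(jE))=h^1(\mathcal{O}_E(jE))=1$; this gives $h^0(kE)=k+1$ and $h^1(kE)=k-1$. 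Finally, to see that every member of $|kE|$ is a sum $E_1+\cdots+E_k$ with $E_i\in|E|$, I would use the morphism $\sigma:\operatorname{Sym}^k|E|\to|kE|$ sending $(E_1,\dots,E_k)\mapsto E_1+\cdots+E_k$, induced by multiplication of sections. Its source $\operatorname{Sym}^k\mathbb{P}^1\cong\mathbb{P}^k$ and its target $|kE|\cong\mathbb{P}^k$ have the same dimension, so the non-constant morphism $\sigma$ has image a closed $k$-dimensional subvariety of $|kE|$ and is therefore surjective, exhibiting every member as the asserted sum.

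The step I expect to be the main obstacle is the irreducibility assertion in case (i): connectedness is free once $h^1(L)=0$, but excluding reducible general members genuinely requires the input that $|L|$ is not composed with a pencil. The honest content there is the implication ``composed with a pencil $\Rightarrow L^2=0$,'' which I would justify through the structure of the moving class (again via Proposition \ref{P1} and $q(X)=0$) rather than by a one-line intersection estimate, since a mere decomposition $L\sim M+N$ into two moving classes does not by itself make the general member reducible. Everything else reduces to bookkeeping with Riemann--Roch, adjunction, and the displayed exact sequences.
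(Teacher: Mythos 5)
The paper does not prove this statement at all: its ``proof'' is a citation to Saint--Donat \cite[Proposition 2.6]{SD}. Your proposal is an actual proof, and it is correct in substance; it is the standard modern reconstruction of Saint--Donat's dichotomy. The main difference in route is that you obtain $h^1(L)=0$ in case (i) from Kawamata--Viehweg vanishing for the nef and big class $L$, whereas Saint--Donat's original argument is more elementary (and works in positive characteristic); your treatment of irreducibility via ``not composed with a pencil, since $q(X)=0$ forces a rational pencil with fibre class of square zero'' is exactly the classical mechanism, and your identification of the right obstacle there is accurate. Two small points should be tightened. First, in case (ii) you invoke the elliptic fibration to get $\mathcal{O}_E(E)\cong\mathcal{O}_E$, but the existence of that fibration presupposes $h^0(\mathcal{O}_X(E))\ge 2$; the clean order of argument is to note $h^2(\mathcal{O}_X(E))=0$ and $\chi(\mathcal{O}_X(E))=2$, so $h^0(\mathcal{O}_X(E))\ge 2$, whence $h^0(\mathcal{O}_E(E))\ge 1$ and a degree-zero line bundle with a section on an integral curve is trivial. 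Second, for the surjectivity of $\sigma:\operatorname{Sym}^k|E|\to|kE|$, ``non-constant'' does not by itself give a $k$-dimensional image; you need $\sigma$ generically finite, but this is immediate since distinct members of $|E|$ are disjoint fibres, so the divisor $E_1+\cdots+E_k$ determines the multiset $\{E_1,\dots,E_k\}$ and $\sigma$ is in fact injective. With those two remarks inserted, the argument is complete.
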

\begin{proof}
See \cite[Proposition 2.6]{SD}.
\end{proof}
\begin{Proposition}\label{P3}
Let $L$ be a line bundle on a K3 surface $X$ such that $|L| \ne \varnothing$. Then $|L|$ has no base points outside its fixed components.
\end{Proposition}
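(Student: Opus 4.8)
The plan is to reduce the assertion to the base-point-freeness of the moving part and then to exploit the dichotomy of Proposition~\ref{P2}. First I would write $L \sim M + \Phi$, where $\Phi$ is the fixed part of $|L|$ and $M$ is the moving part, so that $|M|$ has no fixed components and $h^0(M) = h^0(L)$. Since the base locus of $|L|$ is the union of $\mathrm{Supp}(\Phi)$ with the base locus of $|M|$, every base point of $|L|$ lying \emph{outside} the fixed components is a base point of $|M|$; hence it suffices to show that $|M|$ is base-point free. One may assume $h^0(L) \geq 2$, the case $h^0 = 1$ being trivial (the unique member is then entirely fixed). Because $|M|$ has no fixed component, two general members $M_1, M_2 \in |M|$ share no common curve, so $M^2 = M_1 \cdot M_2 \geq 0$ and Proposition~\ref{P2} applies to $M$.

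Next I would dispose of the case $M^2 = 0$. By Proposition~\ref{P2}(ii), $M \cong \mathcal{O}_X(E)^{\otimes k}$ with $E$ an irreducible curve of arithmetic genus one, $E^2 = 0$, and every member of $|M|$ is a sum $E_1 + \cdots + E_k$ with $E_i \in |E|$. Riemann--Roch gives $h^0(E) \geq \chi(\mathcal{O}_X(E)) = 2$, so $E$ moves; since $E$ is irreducible, two distinct members of $|E|$ have no common component and satisfy $E_1 \cdot E_2 = E^2 = 0$, hence are disjoint. The sub-pencil they span therefore has empty base locus, so $|E|$ is base-point free. Given any point $q$, I can then choose each $E_i \in |E|$ avoiding $q$, producing a member of $|M|$ not through $q$; thus $|M|$ is base-point free.

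The substantive case is $M^2 > 0$. Here Proposition~\ref{P2}(i) gives that the general member $C \in |M|$ is irreducible of arithmetic genus $g = \tfrac12 M^2 + 1$ and that $h^1(M) = 0$; moreover $M^2$ is even and positive, so $g \geq 2$. Restricting along $C$ via
\[
0 \to \mathcal{O}_X \to \mathcal{O}_X(C) \to \mathcal{O}_C(C) \to 0,
\]
using the adjunction $\mathcal{O}_C(C) \cong K_C$ together with $H^1(\mathcal{O}_X) = 0$, the restriction $H^0(\mathcal{O}_X(C)) \to H^0(K_C)$ is surjective, so the members of $|M|$ cut out the complete canonical system $|K_C|$ on $C$. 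If a putative base point $p$ of $|M|$ is a \emph{smooth} point of the general $C$, then base-point-freeness of $|K_C|$ (valid since $g \geq 2$) produces a canonical divisor avoiding $p$, hence a member of $|M|$ not passing through $p$, contradicting that $p$ is a base point.

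The hard part will be the remaining possibility: a base point $p$ at which \emph{every} member of $|M|$ is singular, so that the clean adjunction identification above breaks down. To handle this I would pass to the blow-up $\pi : \tilde X \to X$ at $p$ with exceptional divisor $\mathcal{E}$; if the generic multiplicity of $|M|$ at $p$ is $m \geq 2$, the strict transform system moves and is represented by the class $\pi^* M - m\mathcal{E}$, whose self-intersection is $M^2 - m^2$. Since a system that still moves must have non-negative self-intersection, for small $M^2$ this is already a contradiction, and in general I expect that iterating this reduction — controlling the drop in arithmetic genus against the irreducibility of the general member and the vanishing $h^1(M)=0$, in the spirit of the connected-component analysis of Proposition~\ref{P1} and Remark~\ref{RZ} — excludes higher-multiplicity base points altogether. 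Establishing that this local analysis indeed rules out such points is the crux; once it is in place, combining it with the smooth-point case shows $\mathrm{Bs}\,|M| = \varnothing$, which completes the proof.
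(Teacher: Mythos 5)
The paper does not reprove this statement; it cites \cite[Corollary 3.2]{SD}, so your proposal has to be judged on its own merits. Your reduction to the moving part $M$, the disposal of the case $M^2=0$ via Proposition~\ref{P2}(ii), and the smooth-point case via $\mathcal{O}_C(C)\cong K_C$ and surjectivity of $H^0(\mathcal{O}_X(C))\to H^0(K_C)$ are all fine. But there is a genuine gap, and you have flagged it yourself: the case of a base point $p$ at which every member of $|M|$ is singular is not handled. The blow-up iteration you sketch is not carried through, and as stated it would not close easily: after blowing up, the strict transform system can acquire new fixed components, so the inequality ``a moving system has non-negative self-intersection'' does not directly apply to the class $\pi^*M - m\mathcal{E}$, and controlling the induction is exactly the content you have deferred. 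Since this is the only case in which base-point-freeness could actually fail, the proof is incomplete where it matters most.

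The standard way to close this gap (and essentially what Saint-Donat does) is to avoid splitting into smooth and singular points altogether: take $C\in|M|$ a general member, which by Proposition~\ref{P2}(i) is \emph{integral} but possibly singular, and use adjunction in the form $\mathcal{O}_C(C)\cong\omega_C$, the dualizing sheaf. The restriction sequence together with $H^1(\mathcal{O}_X)=0$ shows $H^0(\mathcal{O}_X(C))\to H^0(\omega_C)$ is surjective, so it suffices to know that $\omega_C$ is globally generated as a sheaf; this is a classical fact for an integral Gorenstein curve of arithmetic genus $\ge 1$ (it is not the base-point-freeness of $|K_{\tilde C}|$ on the normalization, which is what your smooth-point argument secretly uses). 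Global generation of $\omega_C$ at $p$ lifts to a section of $\mathcal{O}_X(C)$ not vanishing at $p$, whether or not $p$ is a singular point of $C$. If you want a self-contained proof, supply this lemma on dualizing sheaves (or simply cite it); otherwise the honest course is to do what the paper does and cite \cite{SD} directly.
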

\begin{proof}
See \cite[ Corollary 3.2]{SD}.
\end{proof}
\begin{Theorem}\label{T1}
Let $|L|$ be a complete linear system on a K3 surface $X$,
without fixed components, and such that $L^2 \ge  4$. Then $L$ is hyperelliptic only in
the following cases:\\
(i) There exists an irreducible curve $E$ such that $p_a (E) =1$  and  $E. L = 1  \text{ or } 2$.\\
(ii) There exists an irreducible curve $B$ such that $p_a (B) = 2$  and $L \cong \mathcal{O}_X(2B)$.
\end{Theorem}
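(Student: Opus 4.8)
The plan is to reduce the statement to the classification of surfaces of minimal degree in projective space. First I would record that the hypotheses force $|L|$ to be base-point free: since $|L|$ has no fixed components, Proposition~\ref{P3} gives no base points at all, so $L$ is globally generated and defines a morphism $\phi_L\colon X\to\mathbb{P}^{g}$, where $g=\tfrac12 L^2+1=\dim|L|$ (by Riemann--Roch, using $h^1(L)=0$ from Proposition~\ref{P2}(i) and $h^0(-L)=0$). By Proposition~\ref{P2}(i) the generic member $C\in|L|$ is smooth and irreducible of arithmetic genus $g$, and adjunction on the K3 surface gives $L|_C\cong K_C$. The ideal sequence $0\to\mathcal{O}_X\to\mathcal{O}_X(L)\to K_C\to 0$ together with $h^1(\mathcal{O}_X)=0$ shows that $H^0(X,L)\to H^0(C,K_C)$ is surjective, so $\phi_L|_C$ is the canonical map of $C$. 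Hence $|L|$ being hyperelliptic (that is, $\phi_L$ not birational onto its image, equivalently the generic $C$ hyperelliptic) is exactly the statement that this canonical map is two-to-one onto a rational normal curve.

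Assuming $|L|$ hyperelliptic, I would next pin down the image surface. Put $d=\deg\phi_L\ge 2$ and $Y=\phi_L(X)\subset\mathbb{P}^g$, so $d\cdot\deg Y=L^2=2g-2$. As $Y$ is a nondegenerate irreducible surface of codimension $g-2$ it satisfies $\deg Y\ge g-1$; combined with the previous identity this forces $d=2$ and $\deg Y=g-1$. Thus $Y$ is a surface of minimal degree, and I would invoke their classification: $Y$ is the Veronese surface in $\mathbb{P}^5$, a rational normal scroll, or a cone over a rational normal curve. (Consistently, the general hyperplane section of $Y$ is the degree-$(g-1)$ rational normal curve swept out by $\phi_{K_C}$, and surfaces all of whose hyperplane sections are rational normal curves are precisely those of minimal degree.)

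It then remains to translate each case into a statement about divisors on $X$. If $Y$ is the Veronese surface then $g=5$ and $\phi_L$ factors as $X\xrightarrow{\psi}\mathbb{P}^2\xrightarrow{v_2}\mathbb{P}^5$ with $\psi$ a double cover; setting $B=\psi^{*}\mathcal{O}_{\mathbb{P}^2}(1)$ yields $L\sim 2B$ and $B^2=2$, i.e. $p_a(B)=2$, which is case (ii). If $Y$ is a scroll or a cone, I would pull back the ruling of $Y$ by lines: the preimages form a base-point-free pencil $|E|$ on $X$ with $E^2=0$ (distinct ruling lines are disjoint off a possible vertex), so adjunction on the K3 surface gives $p_a(E)=1$; by Proposition~\ref{P2}(ii) and Remark~\ref{RZ}(b) such a pencil is a genuine genus-one (elliptic) pencil. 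Since $\phi_L|_E$ maps $E$ onto a line with degree dividing $2$, one gets $L\cdot E=\deg(\phi_L|_E)\in\{1,2\}$, which is case (i).

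I expect the real work to sit in the last paragraph. Extracting an honest elliptic pencil from the ruling is subtle precisely when $Y$ is a cone or a degenerate scroll: the double cover $\phi_L$ may cause the preimage of a ruling line to break into two components (producing $L\cdot E=1$) or to be non-reduced, and the vertex of a cone must be handled separately since it can sit in the base locus. Verifying in every case that the resulting class $E$ is an irreducible nef genus-one curve with $E\cdot L\in\{1,2\}$ --- and ruling out spurious configurations --- is the main obstacle; by contrast the numerical reduction to minimal degree and the Veronese case are comparatively routine.
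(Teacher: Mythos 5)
The paper does not actually prove this statement—it is quoted from Saint-Donat \cite{SD} (Theorem 5.2)—and your sketch follows essentially the same route as that reference: pass to the canonical map of the general member of $|L|$, use the degree count $\deg\phi_L\cdot\deg Y=2g-2\ge 2(g-1)$ to force the image to be a surface of minimal degree, and read off cases (i) and (ii) from the Veronese/scroll/cone classification. The technical points you flag at the end (cones with vertex in the image of contracted $(-2)$-configurations, ruling preimages that split or are non-reduced, giving $L\cdot E=1$ versus $2$ via Proposition \ref{P2}(ii)) are exactly the ones Saint-Donat's Section 5 carries out, so your outline is a faithful reconstruction of the cited proof rather than a different argument.
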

\begin{proof}
See \cite[Theorem 5.2]{SD}.
\end{proof}
\section{Structure of Lazarsfeld-Mukai bundles}
In this section we recall the basic properties of the bundle $E_{C,  A}$ of Lazarsfeld  \cite{L} , associated to an irreducible smooth curve $C$ in $X$ and a globally generated  line bundle $A$. 

Let $X$ be  $K3$ surface.  Let $C$ be an irreducible smooth curve in $X$ and let $A$ be a globally generated line bundle on $C$. Viewing $A$ as a sheaf on $X$, consider the evaluation map:
\[
H^0(C, A) \otimes \mathcal{O}_X \to A.
\]
Let $F_{C, A}$ be its kernel and $E_{C, A} := F_{C, A}^*$.  Then $F_{C, A}$ fits in the following exact sequence on $X$.
\begin{equation}\label{e1}
0 \to F_{C, A} \to H^0(C, A) \otimes \mathcal{O}_X \to A \to 0.
\end{equation}
It is easy to check that $F_{C, A}$ is locally free. Dualizing the above exact sequence one gets 
\begin{equation}\label{e2}
0 \to {H^0(C, A)}^* \otimes \mathcal{O}_X \to E_{C, A} \to \mathcal{O}_C(C) \otimes A^* \to 0.
\end{equation}
Then it is easy to check that the following properties hold:
\begin{Lemma}\label{L1}
1. Rank of $E_{C, A} = h^0(C, A)$.\\
2. $\text{det}(E_{C, A}) = \mathcal{O}_X(C)$.\\
3. $c_2(E_{C, A}) = \text{deg}(A)$.\\
4. $h^0(X, {E_{C, A}}^*)= h^1(X, {E_{C, A}}^*) = 0$.\\
5. $E_{C, A}$ is generated by its global sections off a finite set.\\
6. If $\rho(g, d, r) = g -(r+1)(g-d+r) < 0$, then $E_{C, A}$ is non-simple.
\end{Lemma}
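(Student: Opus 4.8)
The plan is to read off all six items from the two defining sequences \eqref{e1} and \eqref{e2}, together with the vanishing $H^1(\mathcal{O}_X)=0$ and Riemann--Roch on the K3 surface. First, since $\iota_*A$ is a torsion sheaf on $X$ (supported on $C$, hence of rank $0$), the sequence \eqref{e1} exhibits $F_{C,A}$ as the kernel of a surjection from the trivial bundle of rank $h^0(A)$; thus $\operatorname{rk} F_{C,A}=\operatorname{rk} E_{C,A}=h^0(A)$, which is item $1$. For items $2$ and $3$ I would compute $\operatorname{ch}(\iota_*A)$. Because $\operatorname{td}(X)=(1,0,2)$ and $\operatorname{ch}_0(\iota_*A)=0$, Riemann--Roch gives $\operatorname{ch}_2(\iota_*A)=\chi(\iota_*A)=\chi(C,A)=\deg A+1-g$, so that $\operatorname{ch}(\iota_*A)=(0,[C],\deg A+1-g)$. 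Subtracting this from $\operatorname{ch}(\mathcal{O}_X^{\oplus h^0(A)})$ in \eqref{e1} yields $\operatorname{ch}(F_{C,A})=(h^0(A),-[C],g-1-\deg A)$; hence $c_1(F_{C,A})=-[C]$ and $c_2(F_{C,A})=\deg A$, and dualising gives $\det E_{C,A}=\mathcal{O}_X(C)$ and $c_2(E_{C,A})=\deg A$.

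For item $4$ I would take the long exact cohomology sequence of \eqref{e1}. As $H^1(\mathcal{O}_X)=0$, it reads
\[
0\to H^0(F_{C,A})\to H^0(A)\xrightarrow{\;\alpha\;} H^0(C,A)\to H^1(F_{C,A})\to 0,
\]
where $\alpha$ is the canonical identification induced by the evaluation map and is therefore an isomorphism. Consequently $H^0(F_{C,A})=H^1(F_{C,A})=0$, and since $E_{C,A}^*=F_{C,A}$ this is exactly item $4$.

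Item $5$ I would deduce from \eqref{e2}, rewriting the quotient as $\iota_*(\omega_C\otimes A^*)$ via the adjunction $\mathcal{O}_C(C)\cong\omega_C$. The trivial subsheaf is globally generated and, away from $C$, the quotient vanishes so $E_{C,A}$ is trivial there. Along $C$ the connecting map lands in $H^1(\mathcal{O}_X)^{\oplus h^0(A)}=0$, so $H^0(E_{C,A})\twoheadrightarrow H^0(C,\omega_C\otimes A^*)$; as a line bundle on a curve, $\omega_C\otimes A^*$ is globally generated off its finite base locus, and these sections lift to $E_{C,A}$, giving global generation off a finite set. For item $6$ I would pass to the Mukai vector $v(E_{C,A})=(h^0(A),[C],r+g-\deg A)$ with $r+1=h^0(A)$. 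A direct computation of the Mukai pairing gives $\langle v,v\rangle=C^2-2(r+1)(r+g-\deg A)=2\rho(g,\deg A,r)-2$, so on the K3 surface $\chi(E_{C,A},E_{C,A})=-\langle v,v\rangle=2-2\rho$. Serre duality gives $\operatorname{ext}^2(E_{C,A},E_{C,A})=\hom(E_{C,A},E_{C,A})$, whence $2\hom(E_{C,A},E_{C,A})-\operatorname{ext}^1(E_{C,A},E_{C,A})=2-2\rho$. If $\rho<0$ the right-hand side exceeds $2$, forcing $\hom(E_{C,A},E_{C,A})\ge 2$, i.e. $E_{C,A}$ is non-simple.

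Items $1$, $2$, $4$ and $5$ are essentially formal once the vanishing $H^1(\mathcal{O}_X)=0$ is available, so the main obstacle is the bookkeeping in items $3$ and $6$: pinning down the degree-four component of the Mukai vector and the correct sign conventions in the Mukai pairing, and then invoking Serre duality on the K3 to turn the Euler characteristic $\chi(E_{C,A},E_{C,A})=2-2\rho$ into the inequality $\hom\ge 2$. A secondary point requiring care in item $5$ is checking that the lifted sections of $\omega_C\otimes A^*$, together with the trivial subsheaf $H^0(A)^*\otimes\mathcal{O}_X$, actually generate every fibre of $E_{C,A}$ off the finite base locus rather than merely a subspace of it.
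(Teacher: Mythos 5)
Your proof is correct; the paper itself offers no argument for this lemma (it is quoted as standard, following Lazarsfeld), and every step you outline --- the rank count from the torsion quotient in \eqref{e1}, the Chern character computation giving $\det E_{C,A}=\mathcal{O}_X(C)$ and $c_2=\deg A$, the cohomology sequence of \eqref{e1} for item 4 (where the middle map is indeed the identity on $H^0(A)$), the lifting argument for item 5, and the Mukai-pairing/Serre-duality count $2\hom(E,E)-\operatorname{ext}^1(E,E)=2-2\rho$ for item 6 --- is the standard one and checks out, including the fibrewise diagram chase you flag at the end, which does close up because the image of $H^0(A)^*\otimes k(x)$ in the fibre $E_{C,A}\otimes k(x)$ is tautologically generated by the constant sections. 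The only caveat is in item 5: the base locus of $\omega_C\otimes A^*$ is finite only when $h^0(\omega_C\otimes A^*)\ge 1$, i.e. $h^1(A)\ge 1$, which holds throughout the paper since $A$ is assumed to contribute to the Clifford index.
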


Furthermore if $E_{C, A}$ is of rank $2$, that is,  $|A|$ is a pencil then $E_{C, A}$ has the following characterization.
\begin{Lemma}\label{L2}
Let $\mathcal{F}$ be a non-simple vector bundle of rank $2$ on $X$. There exists line bundles $M, N$ on $X$ and a zero-dimensional subscheme $Z \subset X$
such that $\mathcal{F}$ fits in an exact sequence
\begin{equation}\label{e3}
0 \to M \to \mathcal{F} \to N \otimes \mathcal{I}_Z \to 0
\end{equation}
and either\\
(a) $M \ge N$ or \\
(b) $Z$ is empty and the sequence splits, $\mathcal{F} \approx M \oplus N$.
\end{Lemma}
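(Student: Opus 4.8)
The plan is to build the destabilizing rank-one sheaf directly out of a non-scalar endomorphism, exploiting that $X$ is a connected projective surface so that reflexivity arguments for rank-one sheaves apply. Since $\mathcal{F}$ is non-simple, $\operatorname{Hom}(\mathcal{F},\mathcal{F})$ strictly contains the scalars, so I would fix a non-scalar $\phi\in\operatorname{End}(\mathcal{F})$. Because $X$ is connected and projective, $\operatorname{tr}(\phi)$ and $\det(\phi)$ lie in $H^0(\mathcal{O}_X)=\mathbb{C}$, so the characteristic polynomial $\lambda^2-\operatorname{tr}(\phi)\lambda+\det(\phi)$ has a complex root $\lambda_0$. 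Setting $\psi:=\phi-\lambda_0\,\mathrm{id}$ then gives a \emph{non-zero} endomorphism (otherwise $\phi$ would be a scalar) with $\det(\psi)=0$; since $\det(\psi)\in H^0(\mathcal{O}_X)$ vanishes identically, every fibre map has rank $\le 1$, so $\psi$ has generic rank exactly one.

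Next I would extract the line bundle $M$. The image $\operatorname{im}(\psi)$ is a rank-one subsheaf of $\mathcal{F}$; its saturation $M$ is a reflexive rank-one sheaf on the smooth surface $X$, hence a line bundle, and by construction the quotient $\mathcal{F}/M$ is torsion-free of rank one. Every torsion-free rank-one sheaf on a smooth surface has the form $N\otimes\mathcal{I}_Z$ with $N=(\mathcal{F}/M)^{**}$ a line bundle and $Z$ a zero-dimensional subscheme, which already produces the exact sequence \eqref{e3}.

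The dichotomy (a)/(b) I would obtain by testing how $\psi$ interacts with $M$. Since $\operatorname{im}(\psi)\subseteq M$, the endomorphism factors as $\mathcal{F}\xrightarrow{\bar\psi}M\hookrightarrow\mathcal{F}$, and I would split into cases according to the restriction $\psi|_M\colon M\to M$. If $\psi|_M=0$, then $\bar\psi$ annihilates $M$ and hence descends to a non-zero map $N\otimes\mathcal{I}_Z\to M$; because $Z$ has codimension two and $M$ is locally free, this extends (Hartogs) to a non-zero map $N\to M$, i.e. $H^0(M\otimes N^{-1})\ne 0$, which is exactly $M\ge N$, giving (a). If instead $\psi|_M\ne 0$, then $\psi|_M$ is a non-zero endomorphism of the line bundle $M$ and hence multiplication by some scalar $c\ne 0$ (as $\operatorname{End}(M)=\mathbb{C}$); since $\psi(\mathcal{F})\subseteq M$, a one-line computation gives $\psi^2=c\,\psi$, so $e:=c^{-1}\psi$ is an idempotent endomorphism of $\mathcal{F}$. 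An idempotent has constant rank (its trace is an integer-valued global function on the connected $X$), so $\mathcal{F}=\operatorname{im}(e)\oplus\ker(e)$ splits as a direct sum of two line bundles with $Z=\varnothing$, which is (b).

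The main obstacle is the careful bookkeeping with rank-one sheaves: checking that the saturation is an honest line bundle, that the quotient is of the stated shape $N\otimes\mathcal{I}_Z$, and above all that a non-zero map out of $N\otimes\mathcal{I}_Z$ extends across $Z$ to witness $M\ge N$. I would make this last point precise through the local identity $\mathcal{H}om(\mathcal{I}_Z,\mathcal{O}_X)=\mathcal{O}_X$ for $Z$ of codimension two, which yields $\operatorname{Hom}(N\otimes\mathcal{I}_Z,M)=H^0(M\otimes N^{-1})=\operatorname{Hom}(N,M)$. The remaining delicate verification is that the idempotent $e$ genuinely splits the bundle globally rather than only generically, which is exactly what the constancy of its rank guarantees.
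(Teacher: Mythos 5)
Your proof is correct and complete. The paper itself does not prove this lemma at all---it is quoted from Donagi--Morrison \cite[Lemma 4.4]{DM}---and your argument is essentially a faithful reconstruction of the classical one used there: subtract a root of the (constant, since $H^0(\mathcal{O}_X)=\mathbb{C}$) characteristic polynomial of a non-scalar endomorphism to obtain $\psi\neq 0$ with $\det\psi=0$ and generic rank one, take $M$ to be the saturation of $\operatorname{im}(\psi)$, and divide into cases according to the scalar $\psi|_M\in\operatorname{End}(M)=\mathbb{C}$. The delicate points are all handled correctly: $M$ is a line bundle because it is the kernel of the surjection from the locally free $\mathcal{F}$ onto a torsion-free quotient, hence reflexive of rank one on a smooth surface; the identification $\mathcal{H}om(\mathcal{I}_Z,L)\cong L$ for $Z$ of codimension two legitimately extends the induced map $N\otimes\mathcal{I}_Z\to M$ to a section of $M\otimes N^{-1}$ in case (a); and $e=c^{-1}\psi$ is a genuine idempotent splitting $\mathcal{F}$ in case (b). The one sentence worth expanding is the final claim of case (b): since $\operatorname{im}(e)$ is a direct summand, the quotient $\mathcal{F}/\operatorname{im}(e)\cong\ker(e)$ is locally free, hence torsion-free, so $\operatorname{im}(e)=\operatorname{im}(\psi)$ is already saturated and therefore equals $M$; consequently $\mathcal{F}/M\cong\ker(e)$ is a line bundle, which forces $Z=\varnothing$ and shows that the specific sequence \eqref{e3} splits with the same $M$ and $N$, rather than merely producing some abstract decomposition of $\mathcal{F}$ into line bundles.
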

\begin{proof}
See \cite[Lemma 4.4]{DM}.
\end{proof}
\section{The main theorem}
In this section we will prove the main theorem. \\
\begin{Lemma}\label{L3}
Let $C$ be an ample curve of genus $g$ on $X$ such that $C$ can be decomposed as $C \sim C_1 + C_2$ such that $C_1.C= C_2.C$ with $C_1.C_2 \le  [\frac{g-1}{2}]$.  If $C$ has another decomposition 
$C \sim D_1 + D_2$  with $D_1.D_2 \le [\frac{g-1}{2}]$ such that $D_1.C > D_2.C$, then $D_1.D_2 \le C_1.C_2$, and the equality holds if and only if $D_1 \sim D_2 + kE + k\Delta$ where $k$ is an  positive integer,$E$ 
is an elliptic curve and $\Delta$ is either zero or a connected reduced $(-2)$ curve. 
\end{Lemma}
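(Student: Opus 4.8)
The plan is to squeeze, out of the two decompositions, a single effective divisor of self-intersection zero and then to identify it via the Saint-Donat structure theory recalled in Propositions \ref{P1} and \ref{P2} and Remark \ref{RZ}. First I would apply the Hodge index theorem to the class $C_1 - C_2$. Since $C_1.C = C_2.C$ and $C$ is ample with $C^2 = 2g-2 > 0$, the class $C_1 - C_2$ is orthogonal to $C$, so $(C_1 - C_2)^2 \le 0$; expanding via $C \sim C_1 + C_2$ gives $(C_1 - C_2)^2 = C^2 - 4\,C_1.C_2 = (2g-2) - 4\,C_1.C_2$, whence $C_1.C_2 \ge \frac{g-1}{2}$. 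Combined with the hypothesis $C_1.C_2 \le [\frac{g-1}{2}]$ and integrality of $C_1.C_2$, this forces $g$ odd and $C_1.C_2 = [\frac{g-1}{2}] = \frac{g-1}{2}$; equality in Hodge index then gives $(C_1-C_2)^2 = 0$, and negative-definiteness on $C^{\perp}$ yields $C_1 \equiv C_2$, hence $C_1 \sim C_2$ because $\mathrm{Pic}(X)$ is torsion-free on a K3 surface. The asserted inequality is now immediate: $D_1.D_2 \le [\frac{g-1}{2}] = C_1.C_2$ by hypothesis.

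For the equality case I would suppose $D_1.D_2 = C_1.C_2 = \frac{g-1}{2}$, so that the same expansion gives $(D_1 - D_2)^2 = C^2 - 4\,D_1.D_2 = 0$. Set $F := D_1 - D_2$. Using $C \sim 2C_1$ from the previous step, the relation $D_1 + D_2 \sim C \sim 2C_1$ gives $2D_1 \sim 2C_1 + F$, so $F \sim 2(D_1 - C_1)$ is divisible by $2$ in $\mathrm{Pic}(X)$. Since $D_1.C > D_2.C$ we have $F.C > 0$, hence $-F$ is not effective and Serre duality gives $h^2(F) = h^0(-F) = 0$; Riemann–Roch on the K3 surface, $\chi(F) = 2 + \tfrac12 F^2 = 2$, then yields $h^0(F) \ge 2$. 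Thus $F$ is effective, moves in at least a pencil, satisfies $F^2 = 0$, and is $2$-divisible.

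Next I would run the structure theory on $F$. Writing $F \sim F' + \Delta$ with $\Delta$ its fixed part and $F'$ base-point free (Proposition \ref{P1}, legitimate since $F^2 = 0 \ge 0$), I expect to establish $F'^2 = 0$, so that $F' \sim kE$ for an elliptic curve $E$ by Proposition \ref{P2}(ii). The identity $0 = F^2 = 2k\,(E.\Delta) + \Delta^2$ together with the intersection pattern of Proposition \ref{P1}(i), namely a unique reduced component $\Delta_1$ of $\Delta$ with $E.\Delta_1 = 1$ and $E.\Delta_i = 0$ otherwise, and the negative-definiteness of the $(-2)$-curve configuration, should pin the fixed part to $\Delta = k\Delta_1$, giving $F \sim k(E + \Delta_1)$, i.e. $D_1 \sim D_2 + kE + k\Delta$. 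The converse is a direct computation: if $D_1 \sim D_2 + k(E + \Delta)$ with $E^2 = 0$ and $\Delta$ a $(-2)$-curve with $E.\Delta = 1$ (or $\Delta = 0$), then $(D_1 - D_2)^2 = k^2(E+\Delta)^2 = 0$, so $D_1.D_2 = \tfrac{C^2}{4} = \tfrac{g-1}{2} = C_1.C_2$.

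The hard part will be the step $F'^2 = 0$. A priori the base-point-free moving part $F'$ could be an irreducible curve of positive self-intersection (Proposition \ref{P2}(i)), which is not a multiple of an elliptic pencil and would break the desired form. I expect to rule this out using the $2$-divisibility $F \sim 2G$ coming from $C_1 \sim C_2$: the class $G$ is itself effective with $G^2 = 0$ and $G.C > 0$ (by the same Riemann–Roch argument), and the extra arithmetic this imposes, combined with effectivity and $F^2 = 0$, should force the moving part to be a multiple of an elliptic curve and the fixed part to be a multiple of a single reduced $(-2)$-curve meeting it once, exactly the configuration of Remark \ref{RZ}(a). Converting the fixed part into the precise shape $k\Delta$, rather than an unspecified negative combination of $(-2)$-curves, is the most delicate bookkeeping and is where I would spend the most care.
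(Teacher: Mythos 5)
Your first half is correct and, if anything, cleaner than the paper's. The paper gets $C_1\sim C_2$ by noting $(C_1-C_2)^2=C^2-4C_1.C_2\ge 0$, so $\pm(C_1-C_2)$ is effective by Riemann--Roch, and a nonzero effective divisor orthogonal to the ample class $C$ is impossible; your Hodge-index argument reaches the same conclusion and has the bonus of forcing $g$ odd and $C_1.C_2=\frac{g-1}{2}$ on the nose, which makes the inequality $D_1.D_2\le[\frac{g-1}{2}]=C_1.C_2$ immediate. (The paper instead derives $2C_1.C_2=2D_1.D_2+\frac{(D_1-D_2)^2}{2}$ and reads off the inequality and the equality criterion from $(D_1-D_2)^2\ge 0$; the two routes are equivalent.) Your treatment of the equality case up to the point ``$F:=D_1-D_2$ is effective, $F^2=0$, $F.C>0$'' via $h^2(F)=h^0(-F)=0$ and $\chi(F)=2$ also matches the paper exactly, and your verification of the converse implication ($D_1\sim D_2+kE+k\Delta$ forces $(D_1-D_2)^2=0$) is a computation the paper omits.

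The genuine gap is the last step: passing from ``$F$ is effective with $F^2=0$'' to ``$F\sim kE+k\Delta$ with $E$ elliptic and $\Delta$ zero or a connected reduced $(-2)$-curve.'' You do not prove this; you outline a plan and explicitly defer the two hard points (showing the moving part $F'$ satisfies $F'^2=0$, and pinning the fixed part to the shape $k\Delta$), so as written the proposal does not close. Note that $F$ need not be nef (e.g.\ $E+\Gamma$ with $\Gamma$ a $(-2)$-curve meeting $E$ once is effective of square zero with $\Gamma$ in its fixed part), so you cannot simply invoke the nef case of Remark~\ref{RZ}(a), and the $2$-divisibility $F\sim 2G$ that you hope will rescue the argument is not obviously the right lever. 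To be fair, this is precisely where the paper itself is thinnest: its proof consists of citing Remark~\ref{RZ}, whose part (b) actually asserts $\Delta=0$ (i.e.\ $F\sim kE$), which does not match the $kE+k\Delta$ form in the Lemma's own statement. So you have correctly located the delicate point, but neither identified the precise statement from Saint-Donat's analysis (Proposition~\ref{P1} applied to $F$, plus the computation $0=F^2$ forcing the configuration) that is needed, nor carried out the bookkeeping that would finish it.
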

\begin{proof}
Note that since $C_1.C_2 \le [\frac{g-1}{2}],  {(C_1-C_2)}^2 = C_1^2 +C_2^2 -2C_1.C_2= C^2 -4C_1.C_2 \ge 0$. Thus $\mathcal{O}_X(C_1 - C_2)$ has a section up to exchanging $C_1$ and $C_2$. Then we have two possibilities. (i) $C_1 -C_2 \nsim 0$,
then $C_1 - C_2 \sim D$ for some effective divisor $D$.\\
Or (ii), $C_1 -C_2 \sim 0$.\\
If $D$ is non-zero effective then since $C$ is ample, $C. D > 0$, a contradiction. Thus $C_1 \sim C_2$. 
Since $D_1.D_2 \le   [\frac{g-1}{2}]$ and $D_1 \nsim D_2$,  as before $D_1 -D_2 \sim D$ for some  non-zero effective divisor $D$. Thus we have
\[
2D_1 \sim C + D \text{ and } 2D_2 \sim C - D,
\]
which gives that $D_1^2 + D_2^2 = \frac{C^2}{2} + \frac{D^2}{2}$.
Thus we have 
\begin{equation}
\begin{split}
C^2  & = D_1^2 + D_2^2 + 2 D_1.D_2 \\
        & = \frac{C^2}{2} + \frac{D^2}{2} + 2D_1.D_2   \text{ which implies } \\
        & \frac{C^2}{2} - \frac{D^2}{2} = 2D_1.D2.
\end{split}
\end{equation}
On the other hand, $\frac{C^2}{2} = 2 C_1.C_2$. Thus we have 
\begin{equation}\label{E_1}
2C_1.C_2= 2D_1.D_2 +\frac{D^2}{2}.
\end{equation}  
Since $D^2 \ge 0, D_1.D_2 \le C_1.C_2$ and equality holds if and only if $D^2 = 0$. By Remark \ref{RZ}  $D \sim kE + k \Delta$ ($\Delta$ could be 0)  where $k$ is a positive integer and $E$ is a smooth elliptic curve and $\Delta$ is a $(-2)$ curve, which concludes the Lemma. 
\end{proof}
\begin{Lemma}\label{L4}
Let $C$ be an ample curve on a K3 surface $X$ with Clifford index $ < [\frac{g-1}{2}]$.  Let $A$ be a globally generated pencil on $C$ computing the Clifford index. Then $E_{C, A}$ fits
in an exact sequence of the form
\[
0 \to M \to E_{C, A} \to N \to 0
\]
where $M, N$ are line bundles with $h^0(M), h^0(N) \ge 2$ and $N$ is base point free.
\end{Lemma}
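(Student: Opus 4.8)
The plan is to derive the non-simplicity of $E_{C,A}$ from a numerical count, invoke the structural Lemma \ref{L2}, and then pin down the resulting sub- and quotient line bundles using that $A$ computes $\mathrm{Cliff}(C)$. Since $|A|$ is a pencil we have $r=1$ and $\mathrm{Cliff}(A)=d-2$, where $d=\deg A$; as $A$ computes the Clifford index, $d-2=\mathrm{Cliff}(C)<[\frac{g-1}{2}]$. A direct check, splitting into the cases $g$ even and $g$ odd to handle the floor, shows this forces $\rho(g,1,d)=2d-g-2<0$. By Lemma \ref{L1}(6) the bundle $E_{C,A}$ is non-simple, so Lemma \ref{L2} produces line bundles $M,N$ and a finite scheme $Z$ with $0\to M\to E_{C,A}\to N\otimes\mathcal{I}_Z\to 0$, which I arrange so that $M\cdot C\ge N\cdot C$ (taking the given sub in case (a), where $M\ge N$, and relabelling the summands in the split case (b)). From $\det E_{C,A}=\mathcal{O}_X(C)$ and $c_2(E_{C,A})=d$ I record $M+N\sim C$ and $M\cdot N+\ell(Z)=d$; with $M\cdot C+N\cdot C=2g-2$ this gives $M\cdot C\ge g-1>0$.

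Next I would identify the quotient on $C$. Composing $M\hookrightarrow E_{C,A}$ with the surjection $E_{C,A}\to\mathcal{O}_C(C)\otimes A^*$ of \eqref{e2} (and noting $\mathcal{O}_C(C)=K_C$ by adjunction): if this were zero, $M$ would embed into $H^0(A)^*\otimes\mathcal{O}_X=\mathcal{O}_X^{\oplus2}$, forcing $-M$ effective and $M\cdot C\le0$, contradicting $M\cdot C>0$. Hence $M|_C\to K_C\otimes A^*$ is a nonzero map of line bundles on $C$, so it is injective with effective cokernel, giving $N|_C\ge A$; therefore $N\cdot C\ge d$, $N^2=N\cdot C-(d-\ell(Z))\ge0$, and $M^2\ge N^2\ge0$ since $M\cdot C\ge N\cdot C$. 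As $M\cdot C,N\cdot C>0$ we have $h^2=0$, so Riemann--Roch on $X$ gives $h^0(M)\ge 2+M^2/2\ge2$ and $h^0(N)\ge 2+N^2/2\ge2$. Restricting along $0\to\mathcal{O}_X(-N)\to M\to M|_C\to0$ and its analogue for $N$, the vanishings $H^0(-N)=H^0(-M)=0$ make restriction injective, so $h^0(M|_C),h^0(N|_C)\ge2$; since $M|_C\otimes N|_C=K_C$, Serre duality gives $h^1(N|_C)=h^0(M|_C)$ and $h^1(M|_C)=h^0(N|_C)$, so both $M|_C$ and $N|_C$ contribute to $\mathrm{Cliff}(C)$.

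The crux is then a two-sided estimate. From $M|_C\otimes N|_C=K_C$ and Riemann--Roch on $C$ one gets the identity $\mathrm{Cliff}(N|_C)=(g+1)-h^0(M|_C)-h^0(N|_C)$; because $N|_C$ contributes, $\mathrm{Cliff}(N|_C)\ge\mathrm{Cliff}(C)=d-2$, i.e. $h^0(M|_C)+h^0(N|_C)\le g+3-d$. On the other hand $h^0(E_{C,A})=g-d+3$, and a Riemann--Roch computation gives $\chi(M)+\chi(N)=h^0(E_{C,A})+\ell(Z)$, so $h^0(M|_C)+h^0(N|_C)\ge h^0(M)+h^0(N)\ge\chi(M)+\chi(N)=(g+3-d)+\ell(Z)$. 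Comparing the two bounds forces $\ell(Z)=0$, whence $N\otimes\mathcal{I}_Z=N$ and the sequence becomes $0\to M\to E_{C,A}\to N\to0$. Finally, $E_{C,A}$ is globally generated off a finite set by Lemma \ref{L1}(5) and surjects onto $N$, so $N$ is globally generated off a finite set; in particular $|N|$ can have no fixed component, and Proposition \ref{P3} then shows $|N|$ is base point free.

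I expect the main obstacle to be recognizing and setting up this pinching argument: getting the Clifford-index identity and the Riemann--Roch sum to squeeze $h^0(M|_C)+h^0(N|_C)$ to exactly $h^0(E_{C,A})$, thereby eliminating $Z$ and simultaneously forcing the equalities $h^1(M)=h^1(N)=0$ and $\mathrm{Cliff}(N|_C)=\mathrm{Cliff}(C)$. By contrast, the verification that the floor hypothesis yields $\rho<0$, the identification $N|_C\ge A$, and the base-point-freeness are comparatively routine once the estimate is in place.
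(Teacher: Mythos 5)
Your proposal is correct, and its skeleton is the same as the paper's: deduce $\rho(g,1,d)<0$ from the Clifford hypothesis, get non-simplicity from Lemma \ref{L1}(6), apply the Donagi--Morrison decomposition of Lemma \ref{L2}, and then pinch $h^0(M|_C)+h^0(N|_C)$ between an upper bound coming from the Clifford index and a lower bound coming from $h^0(E_{C,A})$ to force $Z=\varnothing$. The execution of the pinch, however, is genuinely different, and yours is the more robust version. For the upper bound the paper asserts without proof that $M|_C$ \emph{computes} the Clifford index and uses the resulting equality $c=M\cdot C+2-2h^0(M|_C)$; that equality is a nontrivial Green--Lazarsfeld-type statement. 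You use only the a priori inequality $\mathrm{Cliff}(N|_C)\ge\mathrm{Cliff}(C)$, which is immediate from the definition once $N|_C$ is shown to contribute --- and you supply that verification (via the nonzero composite $M\to K_C\otimes A^*$ giving $N|_C\ge A$, then Riemann--Roch on $X$ and Serre duality on $C$), a point the paper leaves implicit. For the lower bound the paper uses base-point-freeness of $N$ to get the strict inequality $h^0(N\otimes\mathcal{I}_Z)<h^0(N)$ when $Z\ne\varnothing$, whereas you track $\ell(Z)$ exactly through $\chi(M)+\chi(N)=\chi(E_{C,A})+\ell(Z)$ and $h^2(M)=h^2(N)=0$; this also yields $h^0(M),h^0(N)\ge 2$ by Riemann--Roch rather than by the paper's global-generation argument. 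The only step you should make explicit is $\chi(E_{C,A})=h^0(E_{C,A})$, i.e.\ $h^1(E_{C,A})=0$, which follows from Lemma \ref{L1}(4) by Serre duality on the K3; without it your lower bound $\chi(M)+\chi(N)=h^0(E_{C,A})+\ell(Z)$ would degrade to an inequality in the wrong direction. With that line added, your argument is complete and, in the upper-bound step, actually patches the weakest link of the paper's own proof.
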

\begin{proof}
   Let $A$ be a line bundle on $C$ of degree $d$ such that 
$h^0(A) = 2$ and $A$ computes the Clifford index of $C$. Note that $ c= d-2$.  By Riemann-Roch, we have $h^0(K_C \otimes A^*) = g-c-1$. Thus from the exact sequence
\eqref{e2}, we have
\[
 h^0(E_{C, A}) = 2 + h^0(K_C \otimes A^*) = g +1 -c.
 \]
 On the other hand, by Lemma \ref{L2}, there are line bundles $M, N$ satisfying the hypothesis of the Lemma and a zero-dimensional subscheme
  $Z \subset X$ such that $E_{C, A}$ fits in the following 
 exact sequence:
 \[
 0 \to M \to E_{C, A} \to N \otimes \mathcal{I}_Z \to 0.
 \]
 Note that since $E_{C, A}$ is globally generated, $N$ is also globally generated, hence $h^0(N) \ge 2$. If $Z$ is empty, then $M$ is also globally generated and if $Z$ is non-empty, then $M \ge N$. Thus in any case $h^0(M), h^0(N) \ge 2$.
 Let us assume that $Z$ is non-empty.  
  It is easy to see that $M_{\mid_C}$ computes the Clifford index of $C$. Thus we have 
 \[
 c= M.C +2 - 2h^0(M_{\mid_C})  \text{ which gives } h^0(M_{\mid_C})= \frac{M.C}{2} +1 - \frac{c}{2}.
 \]
On the other hand, by Riemann-Roch, we have
\begin{align*}
h^0(N_{\mid_C})= N.C  +1 - g + h^1(N_{\mid_C})= N.C +1 -g + h^0(M_{\mid_C}) \\
                                                                               = N.C  +1 -g +  \frac{M.C}{2} +1 - \frac{c}{2}.
\end{align*}
 Since $E_{C, A}$ is globally generated off a finite set and by Proposition \ref{P3}, $N$ cannot have base points outside a fixed component,
$N$ is base point free. Thus if $Z$ is nonempty, then $h^0(N \otimes \mathcal{I}_Z) < h^0(N)$.  Thus we have,
\begin{equation}
\begin{split}
g + 1 - c  & = h^0(E_{C, A})\\
               & \le h^0(M) + h^0(N \otimes \mathcal{I}_Z)\\
               & < h^0(M) + h^0(N) \\
               & \le h^0(M_{\mid_C}) + h^0(N_{\mid_C})\\
               & = \frac{M.C}{2} +1 - \frac{c}{2} + N.C  +1 -g +  \frac{M.C}{2} +1 - \frac{c}{2}\\
               & =  M.C + N.C + 3 -g - c\\
               & = (M + N).C + 3 - g - c\\
               & =C^2  + 3 - g -c\\
               & =2g - 2 + 3 - g - c\\
               & =g + 1 - c,
\end{split}
\end{equation}
a contradiction.
Thus $E_{C, A}$ fits in the following exact sequence
\begin{equation}\label{e4}
0 \to M \to E_{C, A} \to N \to 0.
\end{equation}
\end{proof}
\begin{Remark}\label{RY}
Note that if the sequence, in the above Lemma does not split, then $M \ge N$. Thus $(M-N).C \ge 0$ for any irreducible curve $C$. In other words, $M \otimes N^*$ is nef. 
\end{Remark}
{\bf Proof of the  Theorem \ref{MT}}:\\
By Lemma \ref{L4}, $E_{C, A}$ fits in an exact sequence of the form \ref{e4}.
 If $M \sim N$ then, $h^1(M \otimes N^*) = h^1(\mathcal{O}_X) = 0$. Hence the sequence splits and we are done. Let us assume $M \nsim N$.
Note that $M.N = c_2(E_{C, A}) = d$. \\
Now 
\begin{equation}
\begin{split}
{c_1(M \otimes N^*)}^2 = M^2 + N^2 - 2 M.N  & = M^2 +N^2 + 2M.N -4M.N\\
                                                                         & = {c_1(M \otimes N)}^2 - 4d\\
                                                                         & = C^2 -4d\\
                                                                         & = 2g -2 -4d  \ge 0,   \text{ since } c = d -2 < [\frac{g-1}{2}]-1.
\end{split}
\end{equation}    
Since $M \ge N$, the Euler characteristic computation says that $M \otimes N^*$  has a section. In other words, $|M \otimes N^*|$ contains an effective divisor.\\  
{\bf Claim: $h^1(M \otimes N^*) = 0$}:\\
{\bf Proof of the claim}:\\
By Remark \ref{RY}, $M \otimes N^*$ is nef.  Thus if  $M \otimes N^*$ is not base point free, then by Remark \ref{RZ}, there exists a smooth elliptic curve $E$ and a rational curve $\Gamma$ such that
$M \otimes N^* \cong \mathcal{O}_X(kE + \Gamma)$, where $k$ is an integer $\ge 2$ and $E . \Gamma =1$. \\ 
But $h^0(\mathcal{O}_X(kE + \Gamma))= k+1$. Thus the Euler characteristic computation says that $h^1(\mathcal{O}_X(kE + \Gamma)) = 0$.\\
Let us assume  $M \otimes N^*$ is base point free.\\
If ${c_1(M \otimes N^*)}^2 > 0$, then by Proposition \ref{P2}, $h^1(M \otimes N^*) = 0$ and we are done in this case.
If ${c_1(M \otimes N^*)}^2 = 0$, then by Proposition \ref{P2}, $M \otimes N^* \cong \mathcal{O}_X(kE)$, where $k$ is a positive integer and $h^1(M \otimes N^*) = k-1$.\\
Note that $M^{\otimes 2} = \mathcal{O}_X(C + kE)$. Thus we have $2M.C = C^2 + k C.E > 2g - 2.$ On the other hand, since $E_{C, A}$ is semistable with respect to
 $\mathcal{O}_X(C), M.C \le g-1$ which is  a contradiction. Thus we have $h^1(M \otimes N^*) = 0$, in other words, the sequence \eqref{e4} splits 
and $E_{C, A} \cong M \oplus N$.  \\

Since $E_{C, A}$ is semistable, $M.\mathcal{O}_X(C) = N. \mathcal{O}_X(C)$.  Thus $C$ has a decomposition of the form $C \sim C_1 + C_2$, where $\mathcal{O}_X(C_1) = M$ and
$\mathcal{O}_X(C_2) = N$ and $C.C_1 = C.C_2$. From the proof of Lemma \ref{L3}, we have $C_1 \sim C_2$. Thus we have $C \sim 2D$ for some effective divisor $D$ and 
\[
\text{Cliff}(C) = d-2 = M.N-2 = D^2-2 = \frac{2g-2}{4}-2= \frac{g-1}{2}-2 .
\]
That proves the first part of the Theorem \ref{MT}.\\
Conversely, let $ C \sim 2D$ but $C \nsim 2D^\prime +kE$ for any positive integer $k$ and for any elliptic curve $E$  and $ \text{Cliff}(C) = [\frac{g-1}{2}]-2$. Let $A$ be a globally 
generated pencil computing the Clifford index of $C$. Let us assume that $E_{C, A}$ is not semistable. 
If the subbundle $M$ in \ref{e4} is not destabilizing, that is $M.\mathcal{O}_X(C) \le g-1$, then from the first part of the proof one can see that the sequence \ref{e4} splits.
Thus $E_{C, A} = M \oplus N$ and $N$ destabilizes $E_{C, A}$.  Therefore in any case either $M.\mathcal{O}_X(C)$ or $N. \mathcal{O}_X(C)$ is bigger than or equals to $g$.
   Write $M=\mathcal{O}_X(D_1)$  and $N = \mathcal{O}_X(D_2)$, where $D_1, D_2$ are effective divisors.\\
   Without loss of generality we assume, that 
  $C. D_1 > C.D_2$.   . Thus we have a decomposition of $C$ of the form $C \sim D_1 + D_2$ with $C.D_1 > C.D_2$. By Lemma \ref{L3}, we have $D_1.D_2 \le D^2= \frac{g-1}{2}$. 
  If $D_1.D_2 < \frac{g-1}{2}$, then $\text{Cliff}(C) < \frac{g-1}{2}-2$, a contradiction. Thus $D_1.D_2 = D^2$. Again by Lemma \ref{L3}, this can happen if and only if
   $C \sim 2D^\prime + kE + k\Delta$ for some positive integer $k$, where  $E$ is  an elliptic curve and $\Delta$ is a $(-2)$ curve.  But by hypothesis, that is not possible. Hence we get a contradiction. Therefore $E_{C, A}$ is semistable.
   \section{Examples}
   
   Let $X$ be the K3 surface given by a smooth quartic hypersurface in $\mathbb{P}^3$. Let $C$ be a quadric hypersurface section. In other words, $C$ is a complete intersection of two hypersufaces of degree $4$ and $2$ respectively.  Clearly $C$ is an ample curve in $X$. Then we have the following facts \cite[p.199, F-2]{ACGH}:\\
$\bullet $ $  W^1_3(C) = \varnothing$\\
$\bullet$ $W^1_4(C) \ne \varnothing$ \\
$\bullet$ $W^3_8(C) \ne \varnothing$\\
$\bullet$ $W^3_8(C)-W_2(C)) \subset W^1_6(C) $\\
$\bullet$ $W^2_7(C) = W^3_8(C) - W_1(C)$.\\
Thus the Clifford index of $C$ is $2$ and computed by a line bundle $A$ of degree $4$. Also note that the genus of the curve $C$ is $9$. Thus the Clifford index of $C$ satisfies the
hypothesis of the Theorem \ref{MT}.  It is easy to check that $C$ has a decomposition as $C \sim 2D$ where $D$ is hyperplane section of $X$.  Thus $C$ and $A$ satisfies the hypothesis of the Theorem \ref{MT}. Hence $E_{C, A}$ is semistable.

\end{document}